\numberwithin{equation}{section}
\newtheorem{thm}{Theorem}[section]
\newtheorem{prop}[thm]{Proposition}
\theoremstyle{definition}
\newtheorem{defn}[thm]{Definition}
\newtheorem{ntn}[thm]{Notation}
\theoremstyle{remark}
\newtheorem{rmk}[thm]{Remark}
\newcommand{\upchi}{{\raise.35ex\hbox{\ensuremath{\chi}}}}
\newcommand{\id}{{\operatorname{id}}}
\title[The Primitive Ideal Space of $C(X) \rtimes \mathbb{N}$]{The Primitive Ideal Space of $C(X) \rtimes \mathbb{N}$}
\date{10, Apr, 2025}
\author[X. Chen]{Xiaohui Chen}
\address{Xiaohui Chen,
Department of Mathematics and Physics, North China Electric Power University, Beijing 102206, China}
\email{xiaohui20720@126.com}
\author[H. Li]{Hui Li}
\address{Hui Li,
Department of Mathematics and Physics, North China Electric Power University, Beijing 102206, China}
\email{lihui8605@hotmail.com}
\email{50902471@ncepu.edu.cn}
\subjclass[2010]{46L05}
\keywords{semigroup crossed product, crossed product, topological graph algebra, primitive ideal}
\thanks{The second author is the corresponding author.}
\begin{document}

\begin{abstract}
We describe the primitive ideal spaces and the Jacobson topologies of a special class of topological graph algebras.
\end{abstract}

\maketitle

\section{Introduction}

The primitive ideal spaces of topological graph algebras and their Jacobson topologies are very hard to characterize. Katsura in his preprint \cite{Kat21} lists all prime ideals of the C*-algebra arising from any singly generated dynamical system (Katsura did not assume the second countability in \cite{Kat21}, see also \cite{Kat06}). Brix, Calrsen, and Sims in \cite{BCS23} characterized the Jacobson topology of the primitive ideal space of the groupoid C*-algebra arising from any commuting local homeomorphisms with harmonious families of bisections. Very recently, Christensen and Neshveyev in \cite{CN241} described the Jacobson topology of the primitive ideal space of the C*-algebra of any etale groupoid with abelian isotropy. Both the work of Brix-Calrsen-Sims and the work of Christensen-Neshveyev cover the case of topological graph algebras. Furthermore Christensen and Neshveyev in \cite{CN242} gave a description of the primitive ideal space of the C*-algebra of any etale groupoid with isotropy groups of local polynomial growth. In this short article, we use semigroup crossed product approach to characterize the primitive ideal spaces and the Jacobson topologies of a special class of topological graph algebras.

\section{Preliminaries}

In this section we recap some C*-algebras background. Firstly, we recall the definition of semigroup crossed products from \cite{LR96} and \cite{Li12}.

\begin{defn}
Let $P$ be a semigroup, let $A$ be a unital C*-algebra, and let $\alpha:P \to \mathrm{End}(A)$ be a semigroup homomorphism such that $\alpha_p$ is injective for all $p \in P$. Define the \emph{semigroup crossed product} $A \rtimes_{\alpha} P$ to be the universal unital C*-algebra generated by the image of a unital homomorphism $i_A:A \to A \rtimes_{\alpha} P$ and a semigroup homomorphism $i_P:P \to \mathrm{Isom}(A \rtimes_{\alpha} P)$ satisfying the following conditions:
\begin{enumerate}
\item $i_P(p)i_A(a)i_P(p)^*=i_A(\alpha_p(a))$, for all $p \in P,a \in A$.
\item For any unital C*-algebra $B$, any unital homomorphism $j_A:A \to B$, any semigroup homomorphism $j_P:P \to \mathrm{Isom}(B)$ satisfying $j_P(p)j_A(a)j_P(p)^*=j_A(\alpha_p(a))$, there exists a unique unital homomorphism $\Phi:A \rtimes_{\alpha} P \to B$, such that $\Phi \circ i_A=j_A$ and $\Phi \circ i_P=j_P$.
\end{enumerate}
\end{defn}

\begin{rmk}
let $A$ be a unital C*-algebra, and let $\alpha$ be an injective endomorphism of $A$. Then $\alpha$ induces a semigroup homomorphism $\alpha:\mathbb{N} \to \mathrm{End}(A)$ such that $\alpha_p=\alpha^p$ for all $p \in P$. The semigroup crossed product $A \rtimes_{\alpha} \mathbb{N}$ can be described as a universal unital C*-algebra generated by the image of a unital homomorphism $i_A:A \to A \rtimes_{\alpha} \mathbb{N}$ and an isometry $s$ of $A \rtimes_{\alpha} \mathbb{N}$ satisfying the following conditions:
\begin{enumerate}
\item $si_A(a)s^*=i_A(\alpha(a))$, for all $a \in A$.
\item For any unital C*-algebra $B$, any unital homomorphism $\rho:A \to B$, any isometry $t$ of $B$ satisfying $t\rho(a)t^*=\rho(\alpha(a))$, there exists a unique unital homomorphism $\Phi:A \rtimes_{\alpha} P \to B$, such that $\Phi \circ i_A=\rho$ and $\Phi(s)=t$.
\end{enumerate}
\end{rmk}

%Pimsner defined Cuntz-Pimsner algebras in \cite{Pim97}. The following definition is a slight generalization of \cite{Pim97}.

%\begin{defn}
%let $X$ be a compact Hausdorff space, let $Y$ be a clopen subset of $X$, let $\sigma:Y \to X$ be a surjective continuous map. Then the Cuntz-Pimsner algebra $\mathcal{O}(\sigma)$ of $\sigma$ is defined to be the universal unital C*-algebra generated by the image of a linear map $j_X:C(Y) \to \mathcal{O}(\sigma)$ and two homomorphisms $j_L:C(X) \to \mathcal{O}(\sigma),j_R:C(Y) \to \mathcal{O}(\sigma)$ such that
%\begin{enumerate}
%\item $j_X((f \circ \sigma) x)=j_L(f)j_X(x)$ for all $x \in C(Y),f \in C(X)$.
%\item $j_X(x)^*j_X(y)=j_R(\overline{x} y)$ for all $x,y \in C(Y)$.
%\item $j_X(\sqrt{f} \circ \sigma)j_X(\sqrt{f} \circ \sigma)^*=j_L(f)$ for all nonnegative $f \in C(X)$.
%\item $j_L(1_{C(X)})=j_R(1_{C(Y)})$.
%\item For any unital C*-algebra $B$, any linear map $\psi:C(X) \to B$, and any homomorphisms $\pi:C(X) \to B, \rho:C(Y) \to B$ satisfying the above four conditions, there exists a unique unital homomorphism $\psi \times \pi \times \rho:\mathcal{O}(\sigma) \to B$ such that $(\psi \times \pi \times \rho) \circ j_X=\psi,(\psi \times \pi \times \rho) \circ j_L=\pi,$ and $(\psi \times \pi \times \rho)\circ j_R=\rho$.
%\end{enumerate}
%\end{defn}

Katsura defined topological graphs and topological graph algebras in \cite{Kat04}.

\begin{defn}
Let $E^0$ and $E^1$ be locally compact Hausdorff spaces, let $r:E^1 \to E^0$ be a continuous map, and let $s:E^1\to E^0$ be a local homeomorphism. Then the quadruple $E=(E^0,E^1,r,s)$ is called a \emph{topological graph}.
\end{defn}

\begin{ntn}
Let $E$ be a topological graph. Denote by $E^\infty:=\{(e_n)_{n=1}^{\infty}\in\prod_{n=1}^{\infty}E^1:s(e_i)=r(e_{i+1}),i\geq 1\}$. An element $(e_n) \in E^\infty$ is said to be \emph{periodic} if there exists $p \geq 2$ such that $(e_1,e_2,e_3,\dots)=(e_p,e_{p+1},e_{p+2},\dots)$. Denote by $E^\infty_{\mathrm{Per}}:=\{(e_n) \in E^\infty:(e_n) \text{ is periodic}\}$. Denote by $E^\infty_{\mathrm{Aper}}:=E^\infty\setminus E^\infty_{\mathrm{Per}}$.
\end{ntn}
%; denote by $E_{-\infty}^{+\infty}:=\{(e_n)_{n=-\infty}^{\infty}\in\prod_{n=-\infty}^{\infty}E^1:s(e_i)=r(e_{i+1}),i\in\mathbb{Z}\}$.

We only consider a special class of topological graphs and their associated C*-algebras. That is, we concentrate on the case that the vertex set and the edge set are the same compact Hausdorff space, the range map is surjective, and the source map is the identity map. More specifically, let $X$ be a compact Hausdorff space and let $\sigma:X \to X$ be a surjective continuous map. Then $E=(E^0,E^1,r,s):=(X,X,\sigma,\id)$ is a topological graph and the definition of this particular kind of topological graph algebras can be reduced to the following form.

\begin{defn}
let $X$ be a compact Hausdorff space and let $\sigma:X \to X$ be a surjective continuous map. Denote by $E=(E^0,E^1,r,s):=(X,X,\sigma,\id)$. Then the topological graph algebra $\mathcal{O}(E)$ of $E$ is defined to be the universal unital C*-algebra generated by the image of a linear map $j_X:C(X) \to \mathcal{O}(E)$ and a homomorphism $j_A:C(X) \to \mathcal{O}(E)$ such that
\begin{enumerate}
\item $j_X((f \circ \sigma) x)=j_A(f)j_X(x)$ for all $x,f \in C(X)$.
\item $j_X(x)^*j_X(y)=j_A(\overline{x} y)$ for all $x,y \in C(X)$.
\item $j_X(\sqrt{f} \circ \sigma)j_X(\sqrt{f} \circ \sigma)^*=j_A(f)$ for all nonnegative $f \in C(X)$.
\item For any unital C*-algebra $B$, any linear map $\psi:C(X) \to B$, and any homomorphism $\pi:C(X) \to B$ satisfying the above three conditions, there exists a unique unital homomorphism $\psi \times \pi:\mathcal{O}(E) \to B$ such that $(\psi\times\pi) \circ j_X=\psi$ and $(\psi\times\pi) \circ j_A=\pi$.
\end{enumerate}
\end{defn}

Finally, we will apply the following version of Williams' theorem to characterize the primitive ideal space of the crossed product by abelian groups (see also \cite{Wil81, Wil07}).

\begin{defn}
Let $X$ be a locally compact Hausdorff space and let $\gamma:X \to X$ be a homomorphism. For $x,y\in X$, define $x \sim y$ if $\overline{\{\gamma^z(x):z \in \mathbb{Z}\}} =\overline{\{\gamma^z(y):z \in \mathbb{Z}\}}$. Then $\sim$ is an equivalent relation on $X$. For $x \in X$, define $[x] :=\overline{\{\gamma^z(x):z \in \mathbb{Z}\}}$, which is called the \emph{quasi-orbit} of $x$. The quotient space $Q(X /\mathbb{Z})$ by the relation $\sim$ is called the \emph{quasi-orbit space}. For $x \in X$, define $\mathbb{Z}_{x}:=\{ z \in \mathbb{Z} : \gamma^z( x) = x\}$ , which is called the \emph{isotropy group} at $x$. For $([x],\lambda ), ([y],\eta ) \in Q(X/ \mathbb{Z})\times \mathbb{T}$, define $([x],\phi )\approx ([y],\psi )$ if $[x] = [y]$ and $\lambda^z=\eta^z$ for all $z \in \mathbb{Z}_{x}$. Then $\approx$ is an equivalent relation on $Q(X /\mathbb{Z}) \times \mathbb{T}$.
\end{defn}

\begin{thm}[{\cite[Theorem~1.1]{LR00}}]\label{Williams thm}
Let $X$ be a locally compact Hausdorff space and let $\gamma:X \to X$ be a homomorphism. Then $\mathrm{Prim}(C_{0}(X)\rtimes_\gamma \mathbb{Z})\cong (Q(X/ \mathbb{Z})\times \mathbb{T})/\approx$.
\end{thm}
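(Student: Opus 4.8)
The plan is to run the Mackey--Green--Rieffel analysis of a crossed product by an abelian group in the special case $G=\mathbb{Z}$, using $\widehat{\mathbb{Z}}=\mathbb{T}$. For $x\in X$ and $\lambda\in\mathbb{T}$, observe that since $\gamma^{z}(x)=x$ for all $z\in\mathbb{Z}_{x}$, the evaluation $\varepsilon_{x}\colon C_{0}(X)\to\mathbb{C}$ together with $\lambda|_{\mathbb{Z}_{x}}$ is a covariant representation of $(C_{0}(X),\mathbb{Z}_{x})$; inducing the associated one-dimensional representation of $C_{0}(X)\rtimes_{\gamma}\mathbb{Z}_{x}$ up to $C_{0}(X)\rtimes_{\gamma}\mathbb{Z}$ by the Rieffel--Green induction process yields a representation whose kernel I call $P(x,\lambda)$, a primitive ideal. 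The first substantive point is that \emph{every} primitive ideal arises this way: since $\mathbb{Z}$ is amenable this is the Effros--Hahn conjecture, valid by the Gootman--Rosenberg theorem (for $G=\mathbb{Z}$ one can alternatively argue directly via the dual $\mathbb{T}$-action and Rieffel induction). Hence $(x,\lambda)\mapsto P(x,\lambda)$ maps onto $\mathrm{Prim}(C_{0}(X)\rtimes_{\gamma}\mathbb{Z})$.

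Next I would compute the fibres. That $P(x,\lambda)$ depends on $x$ only through the quasi-orbit $[x]$ follows from two facts: translating $x$ inside its $\mathbb{Z}$-orbit produces a unitarily equivalent induced representation, and $P(x,\lambda)$ factors through the quotient $C_{0}([x])\rtimes_{\gamma}\mathbb{Z}$, in which the orbit of $x$ is dense, so a quasi-orbit (Baire-category) argument shows the kernel is unchanged if $x$ is replaced by any $y$ with $\overline{\{\gamma^{z}(y):z\in\mathbb{Z}\}}=[x]$. With the quasi-orbit fixed, $P(x,\lambda)=P(y,\eta)$ if and only if $\lambda$ and $\eta$ restrict to the same character of $\mathbb{Z}_{x}$, that is $\lambda^{z}=\eta^{z}$ for all $z\in\mathbb{Z}_{x}$: the "only if" direction follows by restricting the induced representation to $C_{0}(X)\rtimes_{\gamma}\mathbb{Z}_{x}$ and reading off the $\mathbb{Z}_{x}$-character it contains, while "if" is immediate because the induced representation was built solely out of $\lambda|_{\mathbb{Z}_{x}}$. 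This yields the set-theoretic bijection $(Q(X/\mathbb{Z})\times\mathbb{T})/{\approx}\to\mathrm{Prim}(C_{0}(X)\rtimes_{\gamma}\mathbb{Z})$.

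It then remains to upgrade this bijection to a homeomorphism. Continuity in one direction is the standard continuity of the induction process (from inducing data to induced kernels). For the converse one must show that the composite $\mathrm{Prim}(C_{0}(X)\rtimes_{\gamma}\mathbb{Z})\to\mathrm{Prim}(C_{0}(X))=X\to Q(X/\mathbb{Z})$ is continuous, open, and surjective, so that $P(x_{i},\lambda_{i})\to P(x,\lambda)$ forces $[x_{i}]\to[x]$, and then track the circle coordinate using equivariance for the dual $\mathbb{T}$-action together with the model computation that over a periodic orbit of period $n$ the algebra is $M_{n}(C(\mathbb{T}))$, which pins the local topology to $\mathbb{T}$ modulo $n$-th roots of unity. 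I expect the main obstacle to be precisely this last step --- in particular establishing openness of the quasi-orbit map on $\mathrm{Prim}$ while simultaneously controlling the $\mathbb{T}$-coordinate --- which is where Williams' original transformation-group argument and the refinement in \cite{LR00} do their real work.
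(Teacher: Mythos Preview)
The paper does not give a proof of this theorem at all: it is stated with a citation to \cite[Theorem~1.1]{LR00} and then used as a black box in Section~4. So there is no ``paper's own proof'' against which to compare your proposal.

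That said, your sketch is a faithful outline of the standard Williams/Mackey--Green--Rieffel argument that underlies the cited result: construct primitive ideals $P(x,\lambda)$ by inducing from isotropy, invoke Gootman--Rosenberg for surjectivity, identify the fibres via the quasi-orbit and the restriction of the character to $\mathbb{Z}_{x}$, and finally establish the homeomorphism using continuity/openness of induction and the quasi-orbit map. Your own caveat is well placed --- the delicate part is indeed the openness of the map to the quasi-orbit space together with control of the $\mathbb{T}$-coordinate, and this is precisely what \cite{Wil81} and the refinement in \cite{LR00} supply. If you were to submit this as a proof you would need to fill in that step (or cite it precisely), but as a description of the route taken in the literature it is accurate.
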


\section{Laca's Dilation Theorem Revisited}

Given a semigroup dynamical system $(A,P,\alpha)$, in \cite{CL24} we revisited Laca's dilation theorem when $A$ is a unital commutative C*-algebra and each $\alpha_p$ is injective and unital, in this section we revisit Laca's theorem when $A$ is a unital commutative C*-algebra and each $\alpha_p$ is merely assumed to be injective.

\begin{ntn}
Let $P$ be a subsemigroup of a group $G$ satisfying $G=P^{-1}P$. For $p,q \in P$, define $p \leq q$ if $qp^{-1} \in P$. Then $\leq$ is a reflexive, transitive, and directed relation on $P$.
\end{ntn}

%The following lemma is too elementary to find a reference, but it is very convenient for our construction in the next theorem.

%\begin{lemma}\label{gelfand transform}
%Let $X$ be a compact Hausdorff space and let $\alpha:C(X) \to C(X)$ be an injective (not necessarily unital!) homomorphism. Then there exists a clopen subset $Y$ of $X$ such that $\alpha$ is actually a unital injective homomorphism to $C(Y)$.
%\end{lemma}
%\begin{proof}
%The clopen subset is exactly the support of $\alpha(1_{C(X)})$.
%\end{proof}

\begin{thm}[{cf. \cite[Theorem~2.1]{La00}}]\label{Laca dilation thm}
Let $P$ be a subsemigroup of a group $G$ satisfying $G=P^{-1}P$, let $A=C(X)$ where $X$ is a compact Hausdorff space, and let $\alpha:P \to \mathrm{End}(A)$ be a semigroup homomorphism such that $\alpha_p$ is injective for all $p \in P$. Then there exists a dynamical system $(X_\infty,G,\gamma)$, such that $A \rtimes_{\alpha} P$ is Morita equivalent to $C_0(X_\infty) \rtimes_\gamma G$.
\end{thm}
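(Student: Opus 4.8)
The plan is to follow Laca's original dilation strategy, adapting it to the situation where the endomorphisms $\alpha_p$ are injective but not necessarily unital. First I would form the direct limit of the system over the directed set $(P,\leq)$. For $p \leq q$, the map $qp^{-1} \in P$ gives an injective endomorphism $\alpha_{qp^{-1}}$ of $A$; the point is that this endomorphism is the connecting map in a directed system. More precisely, one sets $A_p := A$ for each $p \in P$ and defines the connecting $*$-homomorphism $A_p \to A_q$ (for $p \le q$) to be $\alpha_{qp^{-1}}$, checking the cocycle identity $\alpha_{rq^{-1}} \circ \alpha_{qp^{-1}} = \alpha_{rp^{-1}}$ for $p \le q \le r$ from the semigroup homomorphism property of $\alpha$. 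Let $A_\infty := \varinjlim (A_p, \alpha_{qp^{-1}})$ with canonical maps $\iota_p : A \to A_\infty$. Since each connecting map is injective, each $\iota_p$ is injective and $A_\infty$ is the closure of $\bigcup_p \iota_p(A)$. Because $A = C(X)$ is commutative and unital but the connecting maps are non-unital, $A_\infty$ is a commutative but possibly non-unital C*-algebra, hence of the form $C_0(X_\infty)$ for a locally compact Hausdorff space $X_\infty$; dually, $X_\infty = \varprojlim(X_p, \sigma_{qp^{-1}})$ where $\sigma_{qp^{-1}} : X \to X$ is the (not necessarily surjective, but with dense-ish range on the relevant piece) continuous map dual to $\alpha_{qp^{-1}}$, restricted appropriately so that the inverse limit is locally compact.

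Next I would construct the $G$-action $\gamma$ on $A_\infty$. The group elements act by "shifting the index": for $g \in G$, write $g = q^{-1}p$ with $p,q \in P$ (possible since $G = P^{-1}P$), and one wants $\gamma_g$ to intertwine $\iota_p$ with $\iota_{q}$ in the right way — concretely $\gamma_g$ should be the automorphism of $A_\infty$ determined on the dense subalgebra by sending $\iota_{rp}(a) \mapsto \iota_{rq}(a)$ for $r$ large enough, and one checks this is well-defined independent of the factorization $g = q^{-1}p$ and of $r$, using directedness of $(P,\leq)$ and the cocycle identities. That $\gamma$ is a genuine group homomorphism $G \to \mathrm{Aut}(A_\infty)$ follows by a similar cofinality argument. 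This produces the dynamical system $(X_\infty, G, \gamma)$.

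The last and main step is to establish the Morita equivalence $A \rtimes_\alpha P \sim_{\mathrm{Morita}} C_0(X_\infty) \rtimes_\gamma G$. Here I would exhibit a full corner: inside $C_0(X_\infty) \rtimes_\gamma G$ consider the projection (or, in the non-unital setting, an approximate unit / multiplier projection) $\mathbf 1_e$ coming from the unit of $A_e = A$ sitting in $A_\infty$ as $\iota_e(1)$ — one must verify $\iota_e(1)$ is a projection in the multiplier algebra $M(C_0(X_\infty))$ and hence gives a corner of the crossed product. The claim is that $\iota_e(1)\,\big(C_0(X_\infty)\rtimes_\gamma G\big)\,\iota_e(1)$ is isomorphic to $A \rtimes_\alpha P$ and that the ideal it generates is all of $C_0(X_\infty)\rtimes_\gamma G$ (fullness), the latter because $\{\gamma_g(\iota_e(1)) : g \in G\}$ has supremum $1$ in $M(C_0(X_\infty))$ by construction of the inverse limit. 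For the isomorphism of the corner with $A \rtimes_\alpha P$, I would use the universal property of the semigroup crossed product: the maps $a \mapsto \iota_e(1) i_{A_\infty}(\iota_e(a)) \iota_e(1)$ and $p \mapsto \iota_e(1) u_p \iota_e(1)$ (where $u_g$ are the canonical unitaries in $M(C_0(X_\infty) \rtimes_\gamma G)$) should satisfy the defining covariance relations of $A \rtimes_\alpha P$ — in particular $\iota_e(1)u_p \iota_e(1)$ is an isometry in the corner because $u_p^* \iota_e(1) u_p = \gamma_{p^{-1}}(\iota_e(1)) = \iota_{p}(1) \geq \iota_e(1)$ — yielding a homomorphism from $A \rtimes_\alpha P$ into the corner, and conversely a representation of the corner is built from a covariant pair for $(A,P,\alpha)$, giving the inverse. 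The main obstacle I anticipate is the careful bookkeeping in the non-unital case: verifying that $\iota_e(1)$ is a well-behaved projection in the multiplier algebra, that the corner is genuinely full (which hinges on cofinality of $P$ inside $G$ and the fact that $\bigvee_g \gamma_g(\iota_e(1)) = 1$), and that the isometries $i_P(p)$ dilate to the unitaries $u_p$ correctly — this is exactly where injectivity-without-unitality of the $\alpha_p$ forces one to replace projections by their multiplier-algebra analogues and to argue with approximate units throughout.
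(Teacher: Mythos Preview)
Your approach is correct for the abstract existence statement, but it differs in emphasis from the paper's proof, and that difference matters for how the result is used downstream.

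The paper does \emph{not} re-prove the Morita equivalence; it simply invokes Laca's original theorem to obtain a C*-dynamical system $(A_\infty,G,\beta)$ with $A\rtimes_\alpha P$ Morita equivalent to $A_\infty\rtimes_\beta G$. The entire content of the paper's proof is the explicit topological identification of the Gelfand spectrum $X_\infty$ of $A_\infty$ in the non-unital case: for each $p\in P$ one sets $X^p_\infty=\{(x_r)_{r\ge p}\in\prod_{r\ge p}X: ev_{x_s}\circ\alpha_{r,s}=ev_{x_r}\ \forall\, s\ge r\ge p\}$, declares $(x_r)_{r\ge p}\sim(y_r)_{r\ge q}$ when they agree for all large $r$, and takes $X_\infty=(\amalg_{p\in P}X^p_\infty)/\!\sim$, with $\gamma$ pieced together from the shift homeomorphisms $\Phi_{p,q}:X^p_\infty\to X^q_\infty$. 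This concrete description is exactly what is exploited in Section~4 to compute $X_\infty$, $\gamma_1^{-1}$, and ultimately $\mathrm{Prim}(\mathcal O(E))$.

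By contrast, you spend most of your effort on the full-corner argument (which is Laca's proof, not the paper's contribution) and dispose of the space in one line as ``$X_\infty=\varprojlim(X_p,\sigma_{qp^{-1}})$ \dots\ restricted appropriately.'' That hedge is where the genuine issue lies: when the $\alpha_p$ are non-unital, the dual maps $\sigma_p$ are only defined on proper clopen subsets of $X$, and the Gelfand spectrum of the direct limit is \emph{not} a single inverse limit of copies of $X$ --- it is the quotient-of-disjoint-union construction above. Your abstract Gelfand argument certainly yields \emph{some} $X_\infty$, so the theorem as literally stated is secured, but without the explicit model you cannot carry out the later computations. In short: your Morita-equivalence argument is fine but redundant (just cite Laca), and the part you gloss over is precisely the part the paper supplies and needs.
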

\begin{proof}
By \cite[Theorem~2.1]{La00}, there exist a C*-dynamical system $(A_\infty,G,\beta)$ such that $A \rtimes_{\alpha}^{e} P$ is Morita equivalent to $A_\infty \rtimes_\beta G$. We cite the proof of \cite[Theorem~2.1]{La00} to sketch the construction of $A_\infty$ and the definition of $\beta$: For $p \in P$, define $A_p:=A$. For $p,q \in P$ with $p \leq q$, define $\alpha_{p,q}:A_p \to A_q$ to be $\alpha_{qp^{-1}}$. Then $\{(A_p,\alpha_{p,q}):p,q \in P,p \leq q\}$ is an inductive system. Denote by $A_\infty:=\lim_{p}(A_p,\alpha_{p,q})$, denote by $\alpha^{p}:A_p \to A_\infty$ the natural unital embedding for all $p \in P$, and denote by $\beta:G \to \mathrm{Aut}(A_\infty)$ the homomorphism satisfying $\beta_{p_0}\circ \alpha^{pp_0}=\alpha^p$ for all $p_0,p\in P$.

For $p \in P$, denote by $X^p_\infty:=\{(x_r)_{r \geq p} \in \prod_{r \geq p}X:ev_{x_s}\circ \alpha_{r,s}=ev_{x_r},\forall s \geq r \geq p\}$. For $(x_r)_{r \geq p} \in X^p_\infty,(y_r)_{r \geq q}\in X^q_\infty$, define $(x_r)_{r \geq p} \sim (y_r)_{r \geq q}$ if $x_r=y_r$ whenever $r\geq p,q$. Define $X_\infty:=(\amalg_{p \in P}X^p_\infty)/\sim$. For $p,q \in P$, define a homeomorphism $\Phi_{p,q}:X^p_\infty \to X^q_\infty,(x_r)_{r \geq p} \mapsto (x_{rq^{-1}p})_{r \geq q}$. For $p_0 \in P$, there exists an injective continuous map from $\amalg_{p \in P}X^p_\infty$ into $\amalg_{p \in P}X^p_\infty$ pieced by $\Phi_{p,pp_0}$, and this injection induces a homeomorphism $\gamma_{p_0}^{-1}:X_\infty \to X_\infty$. Since for $f \in C_0(X_\infty), \beta_{p_0}(f)=f\circ \gamma_{p_0}^{-1}$, we have $C_0(X_\infty) \rtimes_\gamma G \cong A_\infty \rtimes_\beta G$. Hence $A \rtimes_{\alpha} P$ is Morita equivalent to $C_0(X_\infty) \rtimes_\gamma G$ (For the detailed proof of this paragraph, one may refer to the first author's master thesis \cite{Che25}).
\end{proof}

%\begin{ntn}
%We explicitly describe $X_\infty$ and the action of $G$ on $X_\infty$ given in the above theorem.

%\[
%X_\infty=\{(x_p)_{p \in P} \in \prod_{p \in P}X_p:f_{q,p}(x_q)=x_p,\forall p \leq q\}.
%\]

%For $p_0,p,q \in P$ with $q \geq p_0,p$, for $(x_p)_{p \in P} \in X_\infty$, we have

%\[(p_0\cdot (x_p))(p)=x_{pp_0}, (p_0^{-1}\cdot (x_p))(p)=f_{q,p}(x_{qp_0^{-1}}).\]

%Specifically, when $G$ is abelian, we have a simpler form of the group action

% \[\frac{p_0}{q_0} \cdot (x_p)=(f_{q_0}(x_{pp_0})).\]
%\end{ntn}

\section{An Application to Topological Graph Algebras}

Throughout this section, we use the following notation: Let $X$ be a compact Hausdorff space and let $\alpha:C(X) \to C(X)$ be an injective homomorphism. Denote by $\alpha:\mathbb{N} \to \mathrm{End}(C(X)), p \mapsto \alpha^p$. By the Gelfand transform there exist a clopen subset $Y$ of $X$ and a surjective continuous map $\sigma:Y\to X$ such that $\alpha(f)=f\circ \sigma$.

In this situation we are able to simplify the construction of the proof of Theorem~\ref{Laca dilation thm}. For $p \in \mathbb{N}$, we have $X^p_\infty=\{(x_r)_{r \geq p}:(x_p,x_{p+1},x_{p+2},\dots) \in E^\infty\}$. So
\begin{align*}
X_\infty \cong \{(x_r)_{r \geq 0} \in E^\infty:x_0 \in Y\} \amalg \amalg_{p=0}^{\infty}\{(x_r)_{r \geq p} \in E^\infty:x_p \notin Y\}
\end{align*}
and $\gamma_1^{-1}:X_\infty \to X_\infty$ is given by the following formulae
\[
\gamma_1^{-1}(Y\ni x_0,x_1,x_2,\dots)=\sigma(x_0),x_0,x_1,x_2,\dots \in \{(x_r)_{r \geq 0}:(x_r)_{r \geq 0} \in E^\infty\};
\]
\[
\gamma_1^{-1}(Y\not\ni x_p,x_{p+1},x_{p+2},\dots)=x_p,x_{p+1},x_{p+2},\dots \in \{(x_r)_{r \geq p+1} \in E^\infty:x_p \notin Y\}.
\]

Denote a topological graph by $E=(E^0,E^1,r,s):=(X,Y,\sigma,\iota)$. In general, $\mathcal{O}(E)$ is a quotient of $\mathrm{Prim}(C(X) \rtimes_{\alpha} \mathbb{N})$. Under certain condition, they are isomorphic. In this section, we describe the primitive ideal spaces and the Jacobson topologies of $\mathcal{O}(E)$ when $\alpha$ is unital.

\begin{prop}\label{O(E) iso C(X) rtimes N}
Suppose that $\alpha$ is unital. Then $\mathcal{O}(E) \cong C(X) \rtimes_\alpha \mathbb{N}$.
\end{prop}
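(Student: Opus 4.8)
The plan is to verify directly that the universal properties of $\mathcal{O}(E)$ and of $C(X)\rtimes_\alpha\mathbb{N}$ produce mutually inverse $*$-homomorphisms. I first record the structural facts that make this work. Since $\alpha$ is unital, $Y=X$ and $\sigma\colon X\to X$ is a surjective continuous map with $\alpha(f)=f\circ\sigma$, and in particular $1_X\circ\sigma=1_X$. Consequently the isometry $s$ in $C(X)\rtimes_\alpha\mathbb{N}$ is in fact a unitary, because $ss^*=s\,i_A(1_X)\,s^*=i_A(\alpha(1_X))=i_A(1_X)=1$; this is the one place where the hypothesis ``$\alpha$ unital'' is essential, and it is what allows a one-sided crossed product to be recovered. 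Dually, $j_X(1_X)$ is a unitary in $\mathcal{O}(E)$: relation (2) gives $j_X(1_X)^*j_X(1_X)=j_A(1_X)=1$ and relation (3) (with $f=1_X$) gives $j_X(1_X)j_X(1_X)^*=j_A(1_X)=1$, where I use that $j_A$ is unital (true since $X$ is compact). I shall also use the ``right action'' identity $j_X(x)j_A(f)=j_X(xf)$; it follows from relation (2) together with $j_A$ being a homomorphism by expanding $\bigl(j_X(xf)-j_X(x)j_A(f)\bigr)^*\bigl(j_X(xf)-j_X(x)j_A(f)\bigr)$ using (2) and commutativity of $C(X)$, so that all four resulting terms equal $j_A(\ol x\ol f xf)$ and the difference is $0$.

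To build $\Phi\colon C(X)\rtimes_\alpha\mathbb{N}\to\mathcal{O}(E)$ I feed the universal property of $C(X)\rtimes_\alpha\mathbb{N}$ the unital homomorphism $j_A\colon C(X)\to\mathcal{O}(E)$ and the isometry $t:=j_X(1_X)^*$. Relation (1) (with $x=1_X$) gives $j_A(a)j_X(1_X)=j_X(a\circ\sigma)$, while the right-action identity gives $j_X(1_X)j_A(a\circ\sigma)=j_X(a\circ\sigma)$; hence $j_A(a)j_X(1_X)=j_X(1_X)j_A(\alpha(a))$, so $t\,j_A(a)\,t^*=j_A(\alpha(a))$. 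This yields a unique unital homomorphism $\Phi$ with $\Phi\circ i_A=j_A$ and $\Phi(s)=j_X(1_X)^*$.

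To build $\Theta\colon\mathcal{O}(E)\to C(X)\rtimes_\alpha\mathbb{N}$ I feed the universal property of $\mathcal{O}(E)$ the linear map $\psi(x):=s^*i_A(x)$ and the unital homomorphism $\pi:=i_A$. Relations (1)--(3) are checked using only $ss^*=1$ and the identity $s^*i_A(\alpha(f))=s^*s\,i_A(f)s^*=i_A(f)s^*$: for (1), $\psi((f\circ\sigma)x)=s^*i_A(\alpha(f))i_A(x)=i_A(f)s^*i_A(x)=\pi(f)\psi(x)$; for (2), $\psi(x)^*\psi(y)=i_A(\ol x)ss^*i_A(y)=i_A(\ol xy)=\pi(\ol xy)$; for (3), $\psi(\sqrt f\circ\sigma)\psi(\sqrt f\circ\sigma)^*=s^*i_A\bigl((\sqrt f\circ\sigma)^2\bigr)s=s^*i_A(\alpha(f))s=i_A(f)=\pi(f)$. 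This yields a unique unital homomorphism $\Theta$ with $\Theta\circ j_X=\psi$ and $\Theta\circ j_A=i_A$.

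Finally I check $\Theta\circ\Phi=\id$ and $\Phi\circ\Theta=\id$ on generators, which suffices since $i_A(C(X))\cup\{s\}$ generates $C(X)\rtimes_\alpha\mathbb{N}$ and $j_X(C(X))\cup j_A(C(X))$ generates $\mathcal{O}(E)$. Indeed $\Theta\Phi(i_A(a))=\Theta(j_A(a))=i_A(a)$ and $\Theta\Phi(s)=\Theta(j_X(1_X)^*)=\psi(1_X)^*=s$; and $\Phi\Theta(j_A(a))=\Phi(i_A(a))=j_A(a)$ and $\Phi\Theta(j_X(x))=\Phi(s^*i_A(x))=j_X(1_X)j_A(x)=j_X(x)$ by the right-action identity. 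I expect the only real subtlety to be the two structural observations at the start — that $s$ and $j_X(1_X)$ are unitaries, and that $j_A$ is unital — after which everything reduces to bookkeeping with the defining relations.
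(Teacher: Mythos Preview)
Your proof is correct and follows essentially the same route as the paper: build $\Phi$ and $\Theta$ from the two universal properties using $t=j_X(1_X)^*$ and $\psi(x)=s^*i_A(x)$, then check they are mutually inverse on generators. The only cosmetic differences are that you prove the right-action identity $j_X(x)j_A(f)=j_X(xf)$ explicitly (the paper uses it tacitly in the step $j_X(1)j_A(x)=j_X(x)$), you verify $t\,j_A(a)\,t^*=j_A(\alpha(a))$ via that identity rather than via the paper's $\sqrt{f}$ computation, and you argue directly that $s$ is unitary whereas the paper cites an earlier reference.
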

\begin{proof}
Since $\alpha$ is unital, $Y=X$. Denote by $i_A:C(X) \to C(X) \rtimes_{\alpha} \mathbb{N}$ and by $s$ the unital homomorphism and the isometry generating $C(X) \rtimes_{\alpha} \mathbb{N}$. Denote by $j_X:C(X) \to \mathcal{O}(E)$ and by $j_A:C(X) \to \mathcal{O}(E)$ the linear map and the homomorphism generating $\mathcal{O}(E)$. Define $\rho:=j_A$ and define $t:=j_X(1_{C(X)})^*$. Since $t^*t=j_X(1_{C(X)})j_X(1_{C(X)})^*=j_A(1_{C(X)})=1_{\mathcal{O}(E)}, t$ is an isometry. For any nonnegative $f \in C(X)$, we have
\begin{align*}
t\rho(f)t^*&=j_X(1_{C(X)})^*j_A(f)j_X(1_{C(X)})
\\&=(j_A(\sqrt{f})j_X(1_{C(X)}))^*(j_A(\sqrt{f})j_X(1_{C(X)}))
\\&=j_X(\sqrt{f}\circ \sigma)^*j_X(\sqrt{f}\circ \sigma)
\\&=j_A(f\circ \sigma)
\\&=\rho(\alpha(f)).
\end{align*}
So by the universal property of $C(X) \rtimes_\alpha \mathbb{N}$, there exists a unique homomorphism $\Phi:C(X) \rtimes_\alpha \mathbb{N} \to \mathcal{O}(E)$ such that $\Phi\circ i_A=\rho$ and $\Phi(s)=t$.

On the other hand, define $\psi:C(X) \to C(X) \rtimes_{\alpha} \mathbb{N}$ by $\psi(x):=s^*i_A(x)$, which is a linear map. Define $\pi:=i_A$. For any $f,x \in C(X)$, since $si_A(f)s^*=i_A(f \circ \sigma), i_A(f)s^*=s^*i_A(f\circ \sigma)$. So $\psi(f\circ \sigma x)=s^*i_A(f\circ \sigma x)=s^*i_A(f\circ \sigma )i_A(x)=i_A(f)s^*i_A(x)=\pi(f)\psi(x)$. Observe that $\alpha$ is unital because $\sigma$ is surjective. Then $s$ is actually a unitary (see \cite[Page~5, Remark(2)]{CL24}). So $\psi(x)^*\psi(y)=i_A(x)^*ss^*i_A(y)=i_A(\overline{x}y)=\pi(\overline{x}y)$. For any nonnegative $f\in C(X)$, we have $\psi(\sqrt{f}\circ \sigma)\psi(\sqrt{f}\circ \sigma)^*=s^*i_A(\sqrt{f}\circ \sigma)i_A(\sqrt{f}\circ \sigma)^*s=i_A(f)=\pi(f)$. By the universal property of $\mathcal{O}(E)$, there exists a unique homomorphism $\Psi:\mathcal{O}(E) \to C(X) \rtimes_{\alpha} \mathbb{N}$ such that $\Psi \circ j_X=\psi$ and $\Psi \circ j_A=\pi$.

Finally, for any $x,f \in C(X)$, we compute that
\[
\Phi\circ\Psi(j_X(x))=\Phi(\psi(x))=\Phi(s^*i_A(x))=j_X(1_{C(X)})j_A(x)=j_X(x);
\]
\[
\Phi\circ\Psi(j_A(f))=\Phi(\pi(f))=\Phi(i_A(f))=\rho(f)=j_A(f);
\]
\[
\Psi\circ \Phi(i_A(f))=\Psi(\rho(f))=\Psi(j_A(f))=\pi(f)=i_A(f);
\]
\[
\Psi\circ \Phi(s)=\Psi(j_X(1_{C(X)})^*)=\Psi(j_X(1_{C(X)}))^*=\psi(1_{C(X)})^*=(s^*i_A(1_{C(X)}))^*=s.
\]
So $\Phi\circ\Psi=\id$ and $\Psi\circ \Phi=\id$. Hence $\mathcal{O}(E) \cong C(X) \rtimes_\alpha \mathbb{N}$.
\end{proof}

Suppose that $\alpha$ is unital. By Proposition~\ref{O(E) iso C(X) rtimes N} and Theorem~\ref{Laca dilation thm}, $\mathcal{O}(E)\cong C(X) \rtimes_\alpha \mathbb{N} \cong C_0(X_\infty) \rtimes_\gamma G$. Since $\alpha$ is unital, $Y=X$. So $X_\infty=E^\infty,\gamma_{1}^{-1}(e_1,e_2,e_3,\dots)=\sigma(e_1),e_1,e_2,\dots,$ and $\gamma_{1}(e_1,e_2,e_3,\dots)=e_2,e_3,e_4,\dots$. For any $(e_n)\in E^\infty_{\mathrm{Per}}, (e_n)$ is a simple cycle of some length $N \geq 1$ repeating itself. So $[(e_n)]$ consists of finitely many points and $([(e_n)],\lambda)\approx([(e_n)],\eta)$ if and only if $\lambda^N=\eta^N$. For any $(e_n)\in E^\infty_{\mathrm{Aper}}$ and any $\lambda,\eta \in \mathbb{T}$, we have $([(e_n)],\lambda)\approx([(e_n)],\eta)$. By Theorem~\ref{Williams thm}, we have the following result.

\begin{thm}
Suppose that $\alpha$ is unital. Then $\mathrm{Prim}(\mathcal{O}(E))=(\{[(e_n)]:(e_n)\in E^\infty_{\mathrm{Per}}\}\times \mathbb{T}/\approx)\amalg \{[(e_n)]:(e_n)\in E^\infty_{\mathrm{Aper}}\}$.%, and the Jacobson topology of $\mathrm{Prim}(\mathcal{O}(E))$ is the quotient topology.
\end{thm}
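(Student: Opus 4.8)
The plan is to assemble the isomorphism chain $\mathcal{O}(E) \cong C(X) \rtimes_\alpha \mathbb{N} \cong C_0(X_\infty) \rtimes_\gamma G$ already established in the paragraph preceding the statement, and then apply Williams' theorem (Theorem~\ref{Williams thm}) to the dynamical system $(X_\infty, \mathbb{Z}, \gamma)$. Since $\alpha$ is unital, $G = \mathbb{Z}$, $Y = X$, and the explicit formulae for $\gamma_1^{\pm 1}$ show that $X_\infty$ may be identified with $E^\infty$ with $\gamma_1$ acting as the one-sided shift $(e_1, e_2, e_3, \dots) \mapsto (e_2, e_3, e_4, \dots)$ and $\gamma_1^{-1}$ acting as $(e_1, e_2, \dots) \mapsto (\sigma(e_1), e_1, e_2, \dots)$. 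By Theorem~\ref{Williams thm}, $\mathrm{Prim}(\mathcal{O}(E)) \cong (Q(E^\infty/\mathbb{Z}) \times \mathbb{T})/\approx$, so everything reduces to computing the quasi-orbit space and the relation $\approx$ explicitly.

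First I would analyze the quasi-orbit of a point $(e_n) \in E^\infty$. For any $(e_n)$, the forward orbit $\{\gamma_1^z(e_n) : z \geq 0\}$ consists of the shifts of $(e_n)$, and the full orbit adds the backward extensions $\sigma(e_1), \sigma^2(e_1), \dots$ prepended. If $(e_n)$ is aperiodic, the quasi-orbit $[(e_n)] = \overline{\{\gamma_1^z(e_n) : z \in \mathbb{Z}\}}$ is infinite and the isotropy group $\mathbb{Z}_{(e_n)}$ is trivial; hence the fiber over $[(e_n)]$ in the $\approx$-quotient is all of $\mathbb{T}$ collapsed to a point, giving exactly one primitive ideal per aperiodic quasi-orbit. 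If $(e_n)$ is periodic with minimal period $N$, then as noted it is a simple cycle of length $N$ repeating, its orbit closure $[(e_n)]$ is the finite set of the $N$ cyclic shifts, and the isotropy group is $\mathbb{Z}_{(e_n)} = N\mathbb{Z}$; hence $([(e_n)], \lambda) \approx ([(e_n)], \eta)$ iff $\lambda^N = \eta^N$, so the fiber over $[(e_n)]$ is $\mathbb{T}/(\lambda \sim \zeta\lambda, \zeta^N = 1) \cong \mathbb{T}$. I would need to check carefully that two distinct periodic quasi-orbits do not get identified and that a periodic quasi-orbit never coincides with an aperiodic one — this follows because the orbit closure of a periodic point is finite whereas that of an aperiodic point is infinite, and for two periodic points the quasi-orbits agree iff they lie on the same cycle.

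The second ingredient is that the quasi-orbit space $Q(E^\infty/\mathbb{Z})$ decomposes as the disjoint union of $\{[(e_n)] : (e_n) \in E^\infty_{\mathrm{Per}}\}$ and $\{[(e_n)] : (e_n) \in E^\infty_{\mathrm{Aper}}\}$ as sets; combining this with the fiber computation above, the $\approx$-quotient $(Q(E^\infty/\mathbb{Z}) \times \mathbb{T})/\approx$ is set-theoretically $(\{[(e_n)] : (e_n) \in E^\infty_{\mathrm{Per}}\} \times \mathbb{T}/\approx) \amalg \{[(e_n)] : (e_n) \in E^\infty_{\mathrm{Aper}}\}$, which is the claimed description. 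Since the statement as written asserts only an equality of sets (no topology is claimed in this particular theorem), I would not need to track the Jacobson topology here, though I would remark that the homeomorphism of Theorem~\ref{Williams thm} transports it.

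The main obstacle I anticipate is the bookkeeping around periodic points and their quasi-orbits: one must be sure that the notion of "periodic" in the sense of $E^\infty_{\mathrm{Per}}$ (admitting \emph{some} period $p \geq 2$) correctly yields a \emph{finite} orbit closure and isotropy $N\mathbb{Z}$ for the minimal period $N$, and in particular that the backward extensions $\sigma^k(e_1)$ of a periodic path stay within the finite cycle — this uses $s = \iota$ and the structure $(X, X, \sigma, \id)$ forcing $\sigma(e_1) = r(e_1)$ to already be a vertex on the cycle. A secondary, more routine point is verifying that the set-level disjoint union genuinely respects the equivalence relation, i.e. that no gluing occurs across the periodic/aperiodic divide; this is immediate from the finiteness-of-orbit-closure dichotomy but should be stated.
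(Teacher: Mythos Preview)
Your proposal is correct and follows essentially the same route as the paper: the paper's argument is precisely the paragraph preceding the theorem, which assembles the chain $\mathcal{O}(E)\cong C(X)\rtimes_\alpha\mathbb{N}\cong C_0(E^\infty)\rtimes_\gamma\mathbb{Z}$, identifies $\gamma_1$ as the shift, computes the isotropy in the periodic and aperiodic cases, and invokes Williams' theorem. Your additional bookkeeping (that backward extensions of a periodic path remain on the cycle, and that periodic and aperiodic quasi-orbits cannot coincide) is sound and simply makes explicit what the paper leaves implicit.
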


\subsection*{Acknowledgments}

The first author wants to thank the second author for his encouragement and patient supervision.

\end{document}